\long\def\symbolfootnote[#1]#2{\begingroup%
\def\thefootnote{\fnsymbol{footnote}}\footnote[#1]{#2}\endgroup}
\qed\vspace{5pt}}
\newtheoremstyle{lause}
{5pt}
{5pt}
{\slshape}
{\parindent}
{\bfseries}
{.}
{.5em}
{}
\theoremstyle{lause}
\newtheoremstyle{maaritelma}
{5pt}
{5pt}
{\rmfamily}
{\parindent}
{\bfseries}
{.}
{.5em}
{}
\theoremstyle{maaritelma}
\newtheoremstyle{lause}
{5pt}
{5pt}
{\slshape}
{\parindent}
{\bfseries}
{.}
{.5em}
{}
\theoremstyle{lause}
\newtheorem{theorem}{Theorem}[section]
\newtheorem{lemma}[theorem]{Lemma}
\newtheorem{corollary}[theorem]{Corollary}
\newtheoremstyle{maaritelma}
{5pt}
{5pt}
{\rmfamily}
{\parindent}
{\bfseries}
{.}
{.5em}
{}
\theoremstyle{maaritelma}
\newtheorem{example}[theorem]{Example}
\newtheorem{remark}[theorem]{Remark}
\DeclareMathOperator{\cp}{cap}
\numberwithin{equation}{section}
\begin{document}

\thispagestyle{empty}

\begin{center}

{\large{\textbf{Integral representation of balayage on locally compact spaces and its application}}}

\vspace{18pt}

\textbf{Natalia Zorii}

\vspace{18pt}

\emph{Dedicated to Professor Wies{\l}aw Ple\'{s}niak on the occasion of his 80th birthday}\vspace{8pt}

\footnotesize{\address{Institute of Mathematics, Academy of Sciences
of Ukraine, Tereshchenkivska~3, 02000, Kyiv, Ukraine\\
natalia.zorii@gmail.com }}

\end{center}

\vspace{12pt}

{\footnotesize{\textbf{Abstract.} In the theory of inner and outer balayage of positive Radon measures on a locally compact space $X$ to arbitrary $A\subset X$ with respect to suitable, quite general function kernels, developed in a series of the author's recent papers, we find conditions ensuring the validity of the integral representations. The results thereby obtained do hold and seem to be largely new even for several interesting kernels in classical and modern potential theory, which looks promising for possible applications. As an example of such applications, we analyze how the total mass of a measure varies under its balayage with respect to fractional Green kernels.}}
\symbolfootnote[0]{\quad 2010 Mathematics Subject Classification: Primary 31C15.}
\symbolfootnote[0]{\quad Key words: Radon measures on a locally compact space; energy, consistency, and maximum principles; inner and outer balayage and their integral representations.
}

\vspace{6pt}

\markboth{\emph{Natalia Zorii}} {\emph{Integral representation of balayage on locally compact spaces and its application}}

\section{General conventions and the main result}\label{sec1} This study deals with the theory of potentials on a locally compact (Hausdorff) space $X$ with respect to a kernel $\kappa$, {\it a kernel\/} being thought of as a symmetric, lower semi\-con\-tin\-uous (l.s.c.) function $\kappa:X\times X\to[0,\infty]$. To be precise, we are concerned with the theory of inner and outer balayage of positive Radon measures $\omega$ on $X$ to {\it arbitrary} sets $A\subset X$. Such a theory, generalizing the pioneering work by Cartan \cite{Ca2} on Newtonian balayage on $\mathbb R^n$, $n\geqslant3$, to suitable, quite general $\kappa$ and $\omega$ on $X$, has been established in our recent papers \cite{Z22}--\cite{Z23b} and \cite{Z24b,Z24c}; see also \cite{Z24a,Z24b} for its applications to minimum energy problems with external fields.

Our current aim is to find conditions ensuring the validity of the integral representations for inner and outer swept measures. In various settings of outer balayage theory, such a representation is well known and particularly helpful (Bliedtner and Hansen \cite[p.~255]{BH}, Doob \cite[Eq.~(5.2)]{Doob}, Landkof \cite[p.~285]{L}). As to the integral representation of inner swept measures, it was validated before only for the $\alpha$-Riesz kernels $|x-y|^{\alpha-n}$ of order $\alpha\in(0,2]$, $\alpha<n$, on $\mathbb R^n$, $n\geqslant2$ (Zorii \cite[Theorem~5.1]{Z-bal2}).

To substantiate the integral representations for inner and outer swept measures in the framework of the theory developed in \cite{Z22}--\cite{Z23b} and \cite{Z24b,Z24c}, we need, however, to impose upon $X$, $\kappa$, and $\omega$ some additional requirements. In particular, we henceforth assume the space $X$ to have a {\it countable} base of open sets. Then it is
{\it $\sigma$-com\-p\-act} (that is, representable as a countable union of compact sets \cite[Section~I.9, Definition~5]{B1}), see \cite[Section~IX.2, Corollary to Proposition~16]{B3}; and hence
negligibility is the same as local negligibility \cite[Section~IV.5, Corollary~3 to Proposition~5]{B2}, while essential integrability is the same as integrability \cite[Section~V.1, Corollary to Proposition~9]{B2}. (For the theory of measures and integration on a locally compact space we refer to Bourbaki \cite{B2} or Edwards \cite{E2}; see also Fuglede \cite{F1} for a brief survey.)

We denote by $\mathfrak M$ the linear space of all (real-valued Radon) measures $\mu$ on $X$, equipped with the {\it vague} topology of pointwise convergence on the class $C_0(X)$ of all continuous functions $\varphi:X\to\mathbb R$ of compact support ${\rm Supp}(\varphi)$, and by $\mathfrak M^+$ the cone of all positive $\mu\in\mathfrak M$, where $\mu$ is {\it positive} if and only if $\mu(\varphi)\geqslant0$ for all positive $\varphi\in C_0(X)$. Since the space $X$ is second-countable, every $\mu\in\mathfrak M$ has a {\it countable} base of vague neighborhoods (see \cite[Lemma~4.4]{Z23b}), and therefore any vaguely bounded (hence vaguely relatively compact, cf.\ Bourbaki \cite[Section~III.1, Proposition~15]{B2}) $\mathfrak B\subset\mathfrak M$ has a {\it sequence} $(\mu_j)\subset\mathfrak B$ that converges vaguely to some $\mu_0\in\mathfrak M$.

Given $\mu,\nu\in\mathfrak M$, the {\it mutual energy} and the {\it potential} are introduced by
\begin{align*}
  I(\mu,\nu)&:=\int\kappa(x,y)\,d(\mu\otimes\nu)(x,y),\\
  U^\mu(x)&:=\int\kappa(x,y)\,d\mu(y),\quad x\in X,
\end{align*}
respectively, provided the value on the right is well defined as a finite number or $\pm\infty$. For $\mu=\nu$, the mutual energy $I(\mu,\nu)$ defines the {\it energy} $I(\mu,\mu)=:I(\mu)$ of $\mu\in\mathfrak M$.

In what follows, a kernel $\kappa$ is assumed to satisfy the {\it energy principle}, or equivalently to be {\it strictly positive definite}, which means that $I(\mu)\geqslant0$ for all (signed) $\mu\in\mathfrak M$, and moreover that $I(\mu)$ equals $0$ only for the zero measure. Then all (signed) $\mu\in\mathfrak M$ of finite energy $0\leqslant I(\mu)<\infty$ form a pre-Hil\-bert space $\mathcal E$ with the inner product $\langle\mu,\nu\rangle:=I(\mu,\nu)$ and the energy norm $\|\mu\|:=\sqrt{I(\mu)}$, cf.\ \cite[Lemma~3.1.2]{F1}. The topology on $\mathcal E$ determined by means of this norm, is said to be {\it strong}.

Besides, we shall always assume that $\kappa$ satisfies the {\it consistency} principle, which means that the cone
$\mathcal E^+:=\mathcal E\cap\mathfrak M^+$ is {\it complete} in the induced strong topology, and that the strong topology on $\mathcal E^+$ is {\it finer} than the induced vague topology on $\mathcal E^+$; such a kernel is said to be {\it perfect} (Fuglede \cite{F1}). Thus any strong Cauchy sequence (net) $(\mu_j)\subset\mathcal E^+$ converges {\it both strongly and vaguely} to the same unique measure $\mu_0\in\mathcal E^+$, the strong topology on $\mathcal E$ as well as the vague topology on $\mathfrak M$ being Hausdorff.

Yet another permanent requirement upon $\kappa$ is that it satisfies  the {\it domination} and {\it Ugaheri maximum principles}, where the former means that for any $\mu\in\mathcal E^+$ and any $\nu\in\mathfrak M^+$ with $U^\mu\leqslant U^\nu$ $\mu$-a.e., the same inequality holds on all of $X$; whereas the latter means that there exists $h\in[1,\infty)$, depending on $X$ and $\kappa$ only, such that for each $\mu\in\mathcal E^+$ with $U^\mu\leqslant c_\mu$ $\mu$-a.e., where $c_\mu\in(0,\infty)$, we have $U^\mu\leqslant hc_\mu$ on all of $X$. When $h$ is specified, we speak of {\it $h$-Ugaheri's maximum principle}, and when $h=1$, $h$-Ugaheri's maximum principle is referred to as {\it Frostman's maximum principle} (Ohtsuka \cite[Section~2.1]{O}).

Along with these permanent requirements upon the kernel in question, namely
\begin{itemize}
  \item[$(\mathcal R_1)$] {\it $\kappa$ is perfect and satisfies the domination and $h$-Ugaheri maximum principles},
\end{itemize}
we shall always assume that $(\mathcal R_2)$ and $(\mathcal R_3)$ are fulfilled, where:
\begin{itemize}
\item[$(\mathcal R_2)$] {\it $\kappa(x,y)$ is continuous for $x\ne y$}.\footnote{When speaking of a continuous function, we generally understand that the values are {\it finite} real numbers.}
  \item[$(\mathcal R_3)$] {\it $\kappa(\cdot,y)\to0$ uniformly on compact subsets of $X$ when $y\to\infty_X$.} (Here, $\infty_X$ denotes the Alexandroff point of $X$, see \cite[Section~I.9.8]{B1}.)
 \end{itemize}

 \begin{remark}\label{(a)}
$(\mathcal R_1)$--$(\mathcal R_3)$ do hold, for instance, for the following kernels:
\begin{itemize}
  \item[$\checkmark$] The $\alpha$-Riesz kernels $|x-y|^{\alpha-n}$ of order $\alpha\in(0,2]$, $\alpha<n$, on $\mathbb R^n$, $n\geqslant2$ (see  \cite[Theorems~1.10, 1.15, 1.18, 1.27, 1.29]{L}).
  \item[$\checkmark$] The associated $\alpha$-Green kernels, where $\alpha\in(0,2]$ and $\alpha<n$, on an arbitrary open subset of $\mathbb R^n$, $n\geqslant2$ (see \cite[Theorems~4.6, 4.9, 4.11]{FZ}).
   \item[$\checkmark$] The ($2$-)Green kernel, associated with the Laplacian, on a planar bounded open set (see \cite[Theorem~5.1.11]{AG}, \cite[Sections~I.V.10, I.XIII.7]{Doob}, and \cite{E}).
\end{itemize}
For all those kernels, $h=1$, that is, Frostman's maximum principle actually holds.
\end{remark}

Unless explicitly stated otherwise, assume a set $A\subset X$, $A\ne X$, to be {\it arbitrary}. We denote by $\mathfrak M^+(A)$ the cone of all $\mu\in\mathfrak M^+$ {\it concentrated on}
$A$, which means that $A^c:=X\setminus A$ is $\mu$-negligible, or equivalently that $A$ is $\mu$-mea\-s\-ur\-ab\-le and $\mu=\mu|_A$, $\mu|_A$ being the trace of $\mu$ to $A$, cf.\ \cite[Section~V.5.7]{B2}. (If $A$ is closed, then $\mathfrak M^+(A)$ consists of all $\mu\in\mathfrak M^+$ with support ${\rm Supp}(\mu)\subset A$, cf.\ \cite[Section~III.2.2]{B2}.)

Define $\mathcal E^+(A):=\mathcal E\cap\mathfrak M^+(A)$, and let $\mathcal E'(A)$ stand for the closure of $\mathcal E^+(A)$ in the strong topology on $\mathcal E^+$. Being a strongly closed subcone of the strongly complete cone $\mathcal E^+$, $\mathcal E'(A)$ is likewise {\it strongly complete}. It is also worth noting that\footnote{For the {\it inner} and {\it outer} capacities of $A\subset X$, denoted by $\cp_*A$ and $\cp^*A$, respectively, we refer to \cite[Section~2.3]{F1}. If $A$ is capacitable (e.g.\ open or compact), we write $\cp A:=\cp_*A=\cp_*A$.}
\begin{equation}\label{ifff}
\mathcal E'(A)=\{0\}\iff\mathcal E^+(A)=\{0\}\iff\cp_*A=0,
\end{equation}
which can be easily seen by use of \cite[Lemma~2.3.1]{F1}.

To avoid trivialities, suppose that $\cp_*A>0$. When approximating $A$ by compact subsets $K$, we may therefore only consider $K$ with $\cp K>0$.

\subsection{Inner and outer balayage} Recall that $(\mathcal R_1)$--$(\mathcal R_3)$ are assumed to hold. The following Theorem~\ref{th-aux} is actually a very particular case of results from \cite{Z22,Z24c}.

\begin{theorem}\label{th-aux}Fix $\omega\in\mathfrak M^+$, $\omega\ne0$, such that either $I(\omega)<\infty$, or $\omega\in\mathfrak M^+(A^c)$ and $\omega(X)<\infty$. Then there exists the only measure $\omega^A\in\mathcal E'(A)$ having the property\footnote{A proposition $\mathcal P(x)$ involving a variable point $x\in X$ is said to hold {\it nearly everywhere} ({\it n.e.}) on a set $Q\subset X$ if the set $N$ of all $x\in Q$ where $\mathcal P(x)$ fails, is of inner capacity zero. Replacing here $\cp_*N=0$ by $\cp^*N=0$, we obtain the concept of {\it quasi-everywhere} ({\it q.e.}) on $Q$. See \cite[p.~153]{F1}.}
\begin{equation}\label{n.e.1}
U^{\omega^A}=U^\omega\quad\text{n.e.\ on $A$};
\end{equation}
it is said to be the inner balayage of $\omega$ to $A$. If moreover $A$ is Borel, then the same $\omega^A$ also serves as the outer balayage $\omega^{*A}$ of $\omega$ to $A$, uniquely characterized within $\mathcal E'(A)$ by means of the equality\footnote{Compare with Fuglede \cite[Theorem~4.12]{Fu5}, pertaining to quasiclosed sets $A$. (By Fuglede \cite[Definition~2.1]{F71}, $A$ is said to be {\it quasiclosed} if it can be approximated in outer capacity by closed sets. Here it is also worth noting that a quasiclosed set is not necessarily Borel, and vice versa.)\label{F}}
\begin{equation*}
U^{\omega^{*A}}=U^\omega\quad\text{q.e.\ on $A$}.
\end{equation*}
\end{theorem}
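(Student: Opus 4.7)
The strategy is to construct $\omega^A$ by Hilbertian projection onto the strongly complete cone $\mathcal{E}'(A)$ in the case $I(\omega)<\infty$, extend to finite-mass $\omega\in\mathfrak M^+(A^c)$ by approximating $A$ from inside by compact sets, derive the n.e.\ identity on $A$ from the domination principle in $(\mathcal R_1)$, and finally upgrade it to a q.e.\ identity when $A$ is Borel via standard capacitability.

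\textbf{Projection when $I(\omega)<\infty$.} Perfectness of $\kappa$ in $(\mathcal R_1)$ makes $\mathcal E^+$ strongly complete, and hence so is the strongly closed subcone $\mathcal{E}'(A)$. Standard projection onto a closed convex cone in a pre-Hilbert space produces a unique $\omega^A\in\mathcal{E}'(A)$ minimizing $\|\omega-\nu\|$ over $\nu\in\mathcal{E}'(A)$, characterized by
\[
\langle \omega-\omega^A,\nu\rangle\leqslant 0\text{ for all }\nu\in\mathcal{E}'(A),\qquad\langle\omega-\omega^A,\omega^A\rangle=0.
\]
Pairing the first relation with any $\mu\in\mathcal{E}^+(A)$ yields $\int U^{\omega^A}\,d\mu\geqslant\int U^\omega\,d\mu$; since measures in $\mathcal E^+$ ignore sets of inner capacity zero, this forces $U^{\omega^A}\geqslant U^\omega$ n.e.\ on $A$. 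In particular the inequality holds $\omega^A$-a.e., and together with the orthogonality $\langle\omega-\omega^A,\omega^A\rangle=0$ it gives $U^{\omega^A}=U^\omega$ $\omega^A$-a.e. The domination principle in $(\mathcal R_1)$ then propagates $U^{\omega^A}\leqslant U^\omega$ to all of $X$, completing the n.e.\ identity on $A$.

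\textbf{Extension to finite-mass $\omega\in\mathfrak M^+(A^c)$.} Choose an increasing sequence of compact sets $K_n\subset A$ with $\cp K_n\uparrow\cp_*A$. For each $K_n$ a balayage $\omega^{K_n}\in\mathcal{E}'(K_n)\subset\mathcal{E}'(A)$ can be produced directly in the compact-target, finite-mass case by a Gauss-type variational argument against the n.e.-finite potential $U^\omega$. Uniform control of $\omega^{K_n}(X)$ via $h$-Ugaheri, control of tails via $(\mathcal R_3)$, and monotonicity in $n$ together deliver vague and strong Cauchyness, so the strong/vague limit $\omega^A\in\mathcal{E}'(A)$ inherits $U^{\omega^A}=U^\omega$ n.e.\ on $A$ through monotone convergence of potentials. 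Uniqueness in both cases follows from a short computation: if $\nu_1,\nu_2\in\mathcal{E}'(A)$ have equal potentials n.e.\ on $A$, then $\langle\nu_1-\nu_2,\mu\rangle=0$ for every $\mu\in\mathcal{E}^+(A)$ (which ignores sets of inner capacity zero), hence by strong density for every $\mu\in\mathcal{E}'(A)$; testing with $\mu=\nu_1$ and $\mu=\nu_2$ yields $\|\nu_1-\nu_2\|^2=0$, so $\nu_1=\nu_2$ by the energy principle.

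\textbf{Outer balayage for Borel $A$; principal obstacle.} When $A$ is Borel, the exceptional set $N=\{x\in A:U^{\omega^A}(x)\ne U^\omega(x)\}$ is Borel (both potentials are l.s.c.) and hence capacitable under our running hypotheses on $\kappa$; therefore $\cp_*N=0$ upgrades to $\cp^*N=0$, giving $U^{\omega^A}=U^\omega$ q.e.\ on $A$, so $\omega^A$ also serves as $\omega^{*A}$, and uniqueness under the q.e.\ characterization reduces to the n.e.\ case already handled. The most delicate step of the whole argument is the extension to $\omega$ of possibly infinite energy: one must first realize the compact-target balayage by a separate construction, then uniformly bound total masses using $h$-Ugaheri together with $(\mathcal R_3)$ to exclude mass escape to $\infty_X$, and finally upgrade vague convergence to strong convergence so that the n.e.\ identity survives the passage to the limit.
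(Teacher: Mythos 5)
Your proposal sets out to reprove the theorem from scratch, whereas the paper's own proof is a reduction to \cite[Theorems~4.3, 9.4]{Z22} and \cite[Theorems~2.5, 2.11]{Z24c} (plus a verification that $(\mathcal R_2)$, $(\mathcal R_3)$ and the perfect normality of a second-countable locally compact space make those results applicable). Measured against that, your finite-energy projection argument has the right shape, but it contains a genuine gap at the pivot of the whole argument: you pass from ``$U^{\omega^A}\geqslant U^\omega$ n.e.\ on $A$'' to ``$U^{\omega^A}\geqslant U^\omega$ $\omega^A$-a.e.''\ as if $\omega^A$ were concentrated on $A$. It is not, in general: $\omega^A$ lies in $\mathcal E'(A)$, the \emph{strong closure} of $\mathcal E^+(A)$, and such a limit measure may charge $\overline{A}\setminus A$ (for non-closed $A$ this is exactly the delicate point), so an inequality holding n.e.\ on $A$ gives no pointwise information $\omega^A$-a.e.\ without a further lemma relating $\mathcal E'(A)$ to $A$ in capacity terms. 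Without that step you cannot invoke the domination principle to get $U^{\omega^A}\leqslant U^\omega$ on $X$, and the n.e.\ identity does not follow. The infinite-energy, finite-mass case is likewise only gestured at: the strong Cauchyness of $(\omega^{K_n})$, the uniform energy bound (which needs $U^\omega$ to be suitably bounded on $A$, a point the paper extracts from $(\mathcal R_2)$--$(\mathcal R_3)$), and the survival of the n.e.\ identity under the limit are precisely the content of \cite[Theorem~2.5]{Z24c} and are asserted rather than proved.

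The outer-balayage part is also not established. First, you never define $\omega^{*A}$; showing that the exceptional set of the n.e.\ identity is in fact of \emph{outer} capacity zero is not the same as proving that the inner swept measure coincides with the outer balayage, which has its own definition and whose identification with $\omega^A$ is the substance of \cite[Theorem~9.4]{Z22} and \cite[Theorem~2.11]{Z24c}. Second, the claim that the Borel set $N=\{x\in A:\,U^{\omega^A}(x)\ne U^\omega(x)\}$ is ``capacitable under our running hypotheses'' is not justified: capacitability of Borel sets is a Choquet-type theorem requiring continuity/subadditivity properties of the capacity that are not among $(\mathcal R_1)$--$(\mathcal R_4)$ and are not available for general perfect kernels. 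The paper's route instead uses the perfect normality of the (metrizable, $\sigma$-compact) space $X$, via the cited theorems, to handle Borel $A$; your argument replaces this with an unproved capacitability assertion. So while the skeleton of your construction matches the known strategy, the two load-bearing steps --- the $\omega^A$-a.e.\ transfer in the projection argument and the inner-to-outer passage for Borel $A$ --- are missing.
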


\begin{proof}
  If $I(\omega)<\infty$, this follows from \cite[Theorems~4.3, 9.4]{Z22}, while otherwise from \cite[Theorems~2.5, 2.11]{Z24c}. In fact, if $\omega\in\mathfrak M^+(A^c)$ and $\omega(X)<\infty$, then, due to $(\mathcal R_2)$ and $(\mathcal R_3)$, $\omega$ is a  bounded measure whose potential $U^\omega$ is bounded on $A$ and continuous on every compact subset of $A$; therefore,
  \cite[Theorem~2.5]{Z24c} is indeed applicable here. When dealing with the outer balayage, we also use the fact that a locally compact space is second-cou\-n\-t\-able if and only if it is metrizable and $\sigma$-com\-pact \cite[Section~IX.2, Corollary to Proposition~16]{B3}. Being thus metrizable, the (second-countable, locally compact) space $X$ is perfectly normal (see Section~2, Proposition~7 and Section~4, Proposition~2 in \cite[Chapter~IX]{B3}),\footnote{By Urysohn's theorem \cite[Section~IX.4, Theorem~1]{B3}, a Hausdorff topological space $Y$ is said to be {\it normal} if for any two disjoint closed sets $F_1,F_2\subset Y$, there exist disjoint open sets $D_1,D_2\subset Y$ such that $F_i\subset D_i$ $(i=1,2)$. Further, a normal space $Y$ is said to be {\it perfectly normal} if each closed subset of $Y$ is a countable intersection of open sets, see \cite[Exercise~7 to Section~IX.4]{B3}.} and hence \cite[Theorem~9.4]{Z22} and \cite[Theorem~2.11]{Z24c} are applicable here as well.\end{proof}

\begin{remark}Since $\cp_*A>0$ and $\omega\ne0$, neither of $\omega^A$ or $\omega^{*A}$ is zero, which is obvious from (\ref{ifff}) and Theorem~\ref{th-aux} by noting that $U^\omega(x)>0$ for all $x\in X$, the kernel $\kappa$ being strictly positive definite, hence strictly positive (see \cite[p.~150]{F1}).\end{remark}

\begin{remark}If $\mathcal E^+(A)$ is strongly closed, then (and only then) $\mathcal E'(A)=\mathcal E^+(A)$,
and hence Theorem~\ref{th-aux} remains valid with $\mathcal E^+(A)$ in place of $\mathcal E'(A)$. As shown in \cite[Theorem~2.13]{Z24a}, this occurs e.g.\ if $A$ is closed or even quasiclosed (cf.\ footnote~\ref{F}).\end{remark}

\subsection{Integral representation of balayage}In Theorem~\ref{th-main}, presenting the main result of this work, the following $(\mathcal R_4)$, along with $(\mathcal R_1)$--$(\mathcal R_3)$, is assumed to hold:
\begin{itemize}
\item[$(\mathcal R_4)$] {\it The set of all $\varphi\in C_0(X)$ representable as potentials $U^\theta$ of measures $\theta\in\mathcal E$ is dense in the space $C_0(X)$ equipped with the inductive limit topology.}\footnote{With regard to the inductive limit topology on the space $C_0(X)$, see Bourbaki \cite[Section~II.4.4]{B4} and \cite[Section~III.1.1]{B2}, cf.\ also Section~\ref{sec-prel} below.}
  \end{itemize}

\begin{theorem}\label{th-main}
For $A\subset X$ with $\overline{A}:={\rm Cl}_XA\ne X$, consider nonzero $\omega\in\mathfrak M^+(\Omega)$, $\Omega:=X\setminus\overline{A}$, such that $I(\omega)$ or $\omega(X)$ is finite. Then the inner balayage $\omega^A$, uniquely determined within $\mathcal E'(A)$ by means of {\rm(\ref{n.e.1})}, admits the integral representation
\begin{equation}\label{i}
\omega^A=\int\varepsilon_x^A\,d\omega(x),
\end{equation}
where $\varepsilon_x$ denotes the unit Dirac measure at $x\in\Omega$. If moreover $A$ is Borel, then the integral representation holds true for the outer balayage as well, i.e.
\begin{equation}\label{o}
\omega^{*A}=\int\varepsilon_x^{*A}\,d\omega(x).
\end{equation}
\end{theorem}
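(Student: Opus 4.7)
The plan is to show that
$$\tilde\omega := \int \varepsilon_x^A \, d\omega(x)$$
is a well-defined element of $\mathcal E'(A)$ satisfying $U^{\tilde\omega} = U^\omega$ n.e.\ on $A$, so that $\tilde\omega = \omega^A$ by the uniqueness in Theorem~\ref{th-aux}, giving (\ref{i}). For Borel $A$, Theorem~\ref{th-aux} identifies $\varepsilon_x^{*A} = \varepsilon_x^A$ for every $x \in \Omega$ (since $\varepsilon_x \in \mathfrak M^+(A^c)$ with $\varepsilon_x(X)=1$) and $\omega^{*A} = \omega^A$, whence (\ref{o}) follows from (\ref{i}).

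The first step is to establish vague continuity of $x \mapsto \varepsilon_x^A$ on $\Omega$, after which $\varphi \mapsto \int \varepsilon_x^A(\varphi)\,d\omega(x)$ defines a positive Radon measure $\tilde\omega$ on $X$. For $x_n \to x$ in $\Omega$, $(\mathcal R_2)$--$(\mathcal R_3)$ and the domination principle give $U^{\varepsilon_{x_n}^A} \le \kappa(x_n,\cdot)$ uniformly on compact neighbourhoods of $\overline A$, and Ugaheri's principle in $(\mathcal R_1)$ then controls both masses and energies of $(\varepsilon_{x_n}^A)$, ensuring vague relative compactness. By l.s.c.\ passage to the limit in the defining identity, any vague cluster point $\mu_0 \in \mathcal E'(A)$ must satisfy $U^{\mu_0} = \kappa(x,\cdot)$ n.e.\ on $A$, so uniqueness in Theorem~\ref{th-aux} forces $\mu_0 = \varepsilon_x^A$. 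Fubini applied to the nonnegative l.s.c.\ kernel then yields the pointwise identity
$$U^{\tilde\omega}(y) = \int U^{\varepsilon_x^A}(y)\,d\omega(x) \le \int \kappa(x,y)\,d\omega(x) = U^\omega(y) \quad \text{for every } y \in X,$$
the inequality coming from the domination principle applied to each $\varepsilon_x^A$. To place $\tilde\omega$ in $\mathcal E'(A)$, I would approximate $\omega$ by its restrictions $\omega|_{K_n}$ to an exhaustion of $\Omega$ by compacts (possible by $\sigma$-compactness of $X$), approximate each resulting integrated measure by finite convex combinations of the $\varepsilon_x^A$'s (all in the strongly closed convex cone $\mathcal E'(A)$), and pass to the strong limit using energy bounds drawn from $(\mathcal R_4)$ together with the finiteness of $I(\omega)$ or $\omega(X)$.

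The main obstacle is upgrading the pointwise inequality $U^{\tilde\omega} \le U^\omega$ to equality n.e.\ on $A$. I would argue by duality: for each $\mu \in \mathcal E^+(A)$, Tonelli applied to the nonnegative function $(x,y)\mapsto\kappa(x,y) - U^{\varepsilon_x^A}(y)$ yields
$$\int_A \bigl(U^\omega - U^{\tilde\omega}\bigr)\,d\mu = \int_\Omega \int_A \bigl(\kappa(x,y) - U^{\varepsilon_x^A}(y)\bigr)\,d\mu(y)\,d\omega(x),$$
and the inner integral vanishes for every $x \in \Omega$ because $\mu \in \mathcal E$ does not charge the inner polar exceptional set $N_x \subset A$ on which $\kappa(x,\cdot) - U^{\varepsilon_x^A}$ is positive (Theorem~\ref{th-aux} and \cite[Lemma~2.3.1]{F1}). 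Hence the nonnegative function $U^\omega - U^{\tilde\omega}$ on $A$ integrates to zero against every $\mu \in \mathcal E^+(A)$; were $\{y \in A : U^\omega(y) > U^{\tilde\omega}(y)\}$ of positive inner capacity, a compact subset of positive capacity would carry a measure in $\mathcal E^+(A)$ giving a positive integral, a contradiction. Thus $U^{\tilde\omega} = U^\omega$ n.e.\ on $A$, and Theorem~\ref{th-aux} completes the proof.
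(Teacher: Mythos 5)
Your overall strategy (define $\tilde\omega=\int\varepsilon_x^A\,d\omega(x)$ and identify it with $\omega^A$ through a characterizing property, using Fubini--Tonelli and the fact that measures of finite energy do not charge inner-polar sets) is reasonable, and your duality computation in the last paragraph is sound in spirit. But there are three concrete gaps. First, the vague continuity of $x\mapsto\varepsilon_x^A$ on $\Omega$ is asserted rather than proved, and the sketch does not work as stated: vague convergence $\varepsilon_{x_n}^A\to\mu_0$ only yields the one-sided bound $U^{\mu_0}\leqslant\liminf_n U^{\varepsilon_{x_n}^A}$ by lower semicontinuity, so you cannot conclude $U^{\mu_0}=\kappa(x,\cdot)$ n.e.\ on $A$ this way; moreover a vague cluster point need not lie in $\mathcal E'(A)$, so the uniqueness of Theorem~\ref{th-aux} does not apply to it. It is telling that $(\mathcal R_4)$ plays no essential role in your argument, because the paper needs it exactly at this point: it establishes only $\omega$-\emph{measurability} of $x\mapsto\varepsilon_x^A(\varphi)$, first for $\varphi=U^\theta\in C_0(X)$ with $\theta\in\mathcal E$ --- where the symmetry relation (\ref{sym}) turns $\varepsilon_x^A(U^\theta)$ into $U^{(\theta^+)^A}(x)-U^{(\theta^-)^A}(x)$, a difference of l.s.c.\ functions --- and then for general $\varphi$ by the $(\mathcal R_4)$-density of such potentials together with the uniform mass bound $\varepsilon_x^A(X)\leqslant h$. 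Measurability, not continuity, is what is needed and what can actually be proved.

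Second, you never verify $\int\varepsilon_x^A(\varphi)\,d\omega(x)<\infty$ for $\varphi\in C_0^+(X)$, which is required for $\tilde\omega$ to be a Radon measure. The bound $\varepsilon_x^A(X)\leqslant h$ settles this only when $\omega(X)<\infty$; in the permitted case $I(\omega)<\infty$ with $\omega(X)=\infty$ it gives nothing. The paper majorizes $\varphi\leqslant U^{q\gamma}$ by the potential of the capacitary distribution $\gamma$ of ${\rm Supp}(\varphi)$ and then uses symmetry plus Cauchy--Schwarz. Third, your route to $\tilde\omega=\omega^A$ goes through (\ref{n.e.1}), which presupposes $\tilde\omega\in\mathcal E'(A)$; your justification of this membership ("finite convex combinations\dots energy bounds drawn from $(\mathcal R_4)$") is not an argument --- $(\mathcal R_4)$ is a density statement in $C_0(X)$ and yields no energy estimates, and strong (energy-norm) approximation of the vector integral by convex combinations is precisely the hard point. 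The paper sidesteps this by identifying $\nu:=\int\varepsilon_x^A\,d\omega(x)$ through the symmetry characterization of Theorem~\ref{th-char}(i), i.e.\ by verifying $I(\nu,\lambda)=I(\omega,\lambda^A)$ for \emph{all} $\lambda\in\mathcal E^+$ via Lebesgue--Fubini; your duality computation is essentially this verification restricted to $\lambda\in\mathcal E^+(A)$, and extending it to all of $\mathcal E^+$ as the paper does would close this gap without needing $\tilde\omega\in\mathcal E'(A)$ a priori.
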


\begin{remark}\label{(b)} All the $(\mathcal R_1)$--$(\mathcal R_4)$ do hold, for instance, for the following kernels:
\begin{itemize}
\item[$\checkmark$] The $\alpha$-Riesz kernels $|x-y|^{\alpha-n}$ of order $\alpha\in(0,2]$, $\alpha<n$, on $\mathbb R^n$, $n\geqslant2$.
\item[$\checkmark$] The Green kernel, associated with the Laplacian, on any open $D\subset\mathbb R^n$, $n\geqslant3$.
\item[$\checkmark$] The Green kernel, associated with the Laplacian, on bounded open $D\subset\mathbb R^2$.
\item[$\checkmark$] The $\alpha$-Green kernel of order $\alpha\in(1,2)$ for the fractional Laplacian on bounded open $D\subset\mathbb R^n$, $n\geqslant2$, of class $C^{1,1}$.\footnote{An open set $D\subset\mathbb R^n$ is said to be of class $C^{1,1}$ if for every $y\in\partial_{\mathbb R^n}D$, there exist open balls $B(x,r)\subset D$ and $B(x',r)\subset D^c$, where $r>0$, that are tangent at $y$ (see \cite[p.~458]{Bogdan}).\label{F11}}
\end{itemize}

Indeed, according to Remark~\ref{(a)}, it is enough to verify $(\mathcal R_4)$. For the first of these kernels, we begin by noting that for every $\varphi\in C_0(\mathbb R^n)$, there exist a compact set $K\subset\mathbb R^n$ and a sequence $(\varphi_j)\subset C_0^\infty(\mathbb R^n)$ (obtained by regularization \cite[p.~22]{S}) such that all the $\varphi$ and $\varphi_j$ equal $0$ on $K^c$, and moreover $\varphi_j\to\varphi$ uniformly on $K$ (hence, also in the inductive limit topology on $C_0(\mathbb R^n)$, cf.\ Lemma~\ref{foot-conv} below). Since each $\varphi\in C_0^\infty(\mathbb R^n)$ can be represented as the $\alpha$-Riesz potential of a signed measure on $\mathbb R^n$ of finite energy (see \cite[Lemma~1.1]{L} and \cite[Lemma~3.2]{Z-bal2}), $(\mathcal R_4)$ indeed holds.

As to the remaining three kernels on an open set $D\subset\mathbb R^n$, $n\geqslant2$, $(\mathcal R_4)$ follows by applying \cite[p.~75, Remark]{L} if $\alpha=2$, or \cite[Eq.~(19)]{Bogdan} otherwise, to $\varphi\in C_0^\infty(D)$, and then utilizing the same approximation technique as just above.
\end{remark}

\section{Preliminaries}

\subsection{Alternative characterizations of inner and outer balayage}
Given an arbitrary $A\subset X$, we denote by $\mathfrak C_A$ the upward directed set of all compact subsets $K$ of $A$, where $K_1\leqslant K_2$ if and only if $K_1\subset K_2$. If a net $(x_K)_{K\in\mathfrak C_A}\subset Y$ converges to $x_0\in Y$, $Y$ being a topological space, then we shall indicate this fact by writing
\[x_K\to x_0\quad\text{in $Y$ as $K\uparrow A$.}\]

\begin{theorem}\label{th-char}
Under the assumptions of Theorem~{\rm\ref{th-aux}}, the inner balayage $\omega^A$ can alternatively be characterized by means of any one of the following {\rm(i)--(iv)}.
\begin{itemize}
\item[{\rm(i)}] $\omega^A$ is uniquely determined within $\mathcal E'(A)$ by the symmetry relation
\begin{equation}\label{sym}I(\omega^A,\lambda)=I(\lambda^A,\omega)\quad\text{for all $\lambda\in\mathcal E^+$},\end{equation}
where $\lambda^A$ denotes the only measure in $\mathcal E'(A)$ with $U^{\lambda^A}=U^\lambda$ n.e.\ on $A$.
\item[{\rm(ii)}] $\omega^A$ is uniquely determined within $\mathcal E'(A)$ by any one of the limit relations
\begin{align*}
  \omega^K&\to\omega^A\quad\text{strongly in $\mathcal E'(A)$ as $K\uparrow A$},\\
  \omega^K&\to\omega^A\quad\text{vaguely in $\mathcal E'(A)$ as $K\uparrow A$},\\
  U^{\omega^K}&\uparrow U^{\omega^A}\quad\text{pointwise on $X$ as $K\uparrow A$},
\end{align*}
where $\omega^K$ denotes the only measure in $\mathcal E^+(K)$ with $U^{\omega^K}=U^\omega$ n.e.\ on $K$.
\item[{\rm(iii)}] $\omega^A$ is the only measure in the class $\Gamma_{A,\omega}$ having the property
\begin{equation*}
U^{\omega^A}=\min_{\nu\in\Gamma_{A,\omega}}\,U^\nu\quad\text{on $X$},
\end{equation*}
where
\begin{equation*}
\Gamma_{A,\omega}:=\bigl\{\nu\in\mathcal E^+:\ U^\nu\geqslant U^\omega\quad\text{n.e.\ on $A$}\bigr\}.
\end{equation*}
\item[{\rm(iv)}] $\omega^A$ is the only measure in the class $\Gamma_{A,\omega}$, introduced just above, such that
\begin{equation*}
\|\omega^A\|=\min_{\nu\in\Gamma_{A,\omega}}\,\|\nu\|.
\end{equation*}
\end{itemize}

If moreover $A$ is Borel, then all this remains valid with $\omega^A$ and "n.e." replaced throughout by $\omega^{*A}$ and "q.e.", respectively.
\end{theorem}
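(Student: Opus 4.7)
My plan is to reduce each of (i)--(iv) to the inner-balayage machinery developed in \cite{Z22,Z24c}, handling the two hypotheses on $\omega$ separately. In the finite-energy case $I(\omega)<\infty$, everything flows from the Hilbert-space structure on $\mathcal E$; in the bounded-measure case $\omega\in\mathfrak M^+(A^c)$ with $\omega(X)<\infty$, the arguments are pulled from \cite[Theorems~2.5, 2.11]{Z24c}, whose hypotheses are met here because $(\mathcal R_2)$--$(\mathcal R_3)$ force $U^\omega$ to be bounded and continuous on every compact subset of $A$ (as already noted in the proof of Theorem~\ref{th-aux}).

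For $I(\omega)<\infty$, I would view $\omega^A$ as the orthogonal projection of $\omega$ onto the strongly complete convex cone $\mathcal E'(A)\subset\mathcal E$, guaranteed by the consistency principle. Starting from the defining identity $U^{\omega^A}=U^\omega$ n.e.\ on $A$, I would prove the four characterizations in the order (iii), (iv), (i), and then (ii) by a compact exhaustion. For (iii): any $\nu\in\Gamma_{A,\omega}$ satisfies $U^\nu\ge U^\omega=U^{\omega^A}$ n.e.\ on $A$, hence $\omega^A$-a.e., and the domination principle in $(\mathcal R_1)$ lifts this to pointwise on $X$. Given (iii), the inequality
\[\|\omega^A\|^2=\int U^{\omega^A}\,d\omega^A\le \int U^\nu\,d\omega^A=I(\omega^A,\nu)\le\|\omega^A\|\cdot\|\nu\|\]
(Cauchy--Schwarz at the end) yields (iv). For (i), symmetry of the kernel together with $U^\lambda=U^{\lambda^A}$ n.e.\ on $A$, valid $\omega^A$-a.e., gives $I(\omega^A,\lambda)=I(\omega^A,\lambda^A)=I(\lambda^A,\omega)$. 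Finally, for (ii): the net $(\omega^K)_{K\in\mathfrak C_A}$ is strongly Cauchy by monotonicity in $K$ and (iv), perfectness delivers strong and vague convergence to some $\mu_0\in\mathcal E'(A)$, inner regularity of capacity identifies $\mu_0$ with $\omega^A$, and the pointwise monotone convergence of potentials follows from the principle of descent combined with (iii).

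In the bounded case, direct projection is unavailable since $\omega$ may have infinite energy, but \cite[Theorems~2.5, 2.11]{Z24c} provide the analogues of (i)--(iv) under exactly the hypotheses secured here. The only routine check is the well-definedness of $I(\omega,\lambda^A)$ in (i), accomplished by exhausting $\omega$ with its compact restrictions in $A^c$ and passing to the limit using boundedness of $U^{\lambda^A}$ on compacta disjoint from $\overline A$ (by $(\mathcal R_2)$--$(\mathcal R_3)$).

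For the outer balayage with Borel $A$, Theorem~\ref{th-aux} gives $\omega^{*A}=\omega^A$ and the defining equality now holds q.e.\ (not merely n.e.)\ on $A$, so the class $\Gamma_{A,\omega}$ shrinks but its minimizer in each of (iii), (iv) is unchanged; the arguments above transfer essentially verbatim. I expect the main obstacle to be in (ii): lifting the pointwise monotone convergence $U^{\omega^K}\uparrow U^{\omega^A}$ from convergence n.e.\ (which is immediate from inner regularity) to convergence on all of $X$ requires the full strength of $(\mathcal R_1)$---domination together with the $h$-Ugaheri maximum principle---in concert with lower semicontinuity of potentials and the principle of descent.
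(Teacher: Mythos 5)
Your proposal is essentially correct in outline, but it takes a different route from the paper: the paper's entire proof is a citation --- it invokes \cite[Theorem~9.4]{Z22}, \cite[Theorem~3.1]{Z23a}, \cite[Theorem~3.1]{Z23b}, and \cite[Theorems~2.5, 2.11]{Z24c}, and the only content of the proof is the verification (carried over from the proof of Theorem~\ref{th-aux}) that the hypotheses of those results are met under $(\mathcal R_1)$--$(\mathcal R_3)$ (boundedness and continuity of $U^\omega$ on compact subsets of $A$ in the infinite-energy case; metrizability and perfect normality of $X$ for the outer-balayage statements). You instead reconstruct the finite-energy case from scratch via the orthogonal projection onto $\mathcal E'(A)$, in the order (iii)$\to$(iv)$\to$(i)$\to$(ii); this is, in substance, the same Hilbert-space machinery underlying the cited theorems, so what your route buys is self-containedness at the cost of re-deriving known results. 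Your deferral to \cite{Z24c} in the bounded-measure case coincides with what the paper does.

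Two places in your sketch deserve more care. First, in (iii) and (i) you repeatedly pass from ``n.e.\ on $A$'' to ``$\omega^A$-a.e.''\ (resp.\ ``$\lambda^A$-a.e.''). This is immediate for measures of finite energy \emph{concentrated on} $A$, but $\omega^A$ only lies in $\mathcal E'(A)$, the strong closure of $\mathcal E^+(A)$, and need not be concentrated on $A$ itself; justifying the transfer for elements of $\mathcal E'(A)$ is precisely one of the technical points the cited theorems of \cite{Z22,Z23a} handle, so your argument silently leans on them. Second, your treatment of the Borel/outer case is too quick: replacing ``n.e.''\ by ``q.e.''\ is not a verbatim transfer, because outer capacity is not inner regular, so the compact-exhaustion argument in (ii) naturally produces n.e.\ statements only. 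Upgrading them to q.e.\ statements for Borel $A$ is where the capacitability theory enters, and it is exactly why the paper's proof of Theorem~\ref{th-aux} establishes that the second-countable locally compact space $X$ is metrizable, $\sigma$-compact, and perfectly normal before invoking \cite[Theorem~9.4]{Z22} and \cite[Theorem~2.11]{Z24c}. You locate the ``main obstacle'' in the pointwise (everywhere) monotone convergence of potentials, but that difficulty is already present and resolved in the inner case; the genuinely new obstacle in the outer case is the n.e.-to-q.e.\ upgrade, which your proposal does not address.
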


\begin{proof}This follows from \cite[Theorem~9.4]{Z22}, \cite[Theorem~3.1]{Z23a}, \cite[Theorem~3.1]{Z23b}, and \cite[Theorems~2.5, 2.11]{Z24c} in the same manner as in the proof of Theorem~\ref{th-aux}.
\end{proof}

\begin{corollary} Under the assumptions of Theorem~{\rm\ref{th-aux}}, if moreover Frostman's maximum principle holds, then
  $\omega^A$ is of minimum total mass in the class $\Gamma_{A,\omega}$, i.e.
\begin{equation}\label{cor1}\omega^A(X)=\min_{\nu\in\Gamma_{A,\omega}}\,\nu(X).\end{equation}
\end{corollary}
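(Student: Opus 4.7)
The inclusion $\omega^A\in\Gamma_{A,\omega}$ is automatic (the defining identity $U^{\omega^A}=U^\omega$ n.e.\ on $A$ trivially yields $U^{\omega^A}\geq U^\omega$ there), so the content of (\ref{cor1}) is the reverse inequality $\omega^A(X)\leq\nu(X)$ for every $\nu\in\Gamma_{A,\omega}$. My plan is to prove this first when $A=K$ is compact and then pass to the limit $K\uparrow A$.

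For a compact $K$ with $\cp K>0$, the strict positivity of $\kappa$ combined with its lower semi\-con\-tin\-uity on the compact set $K\times K$ forces $\kappa\geq c_K>0$ there, so $\cp K<\infty$; the consistency hypothesis then supplies the Frostman equilibrium measure $\gamma_K\in\mathcal{E}^+(K)$ with $U^{\gamma_K}=1$ n.e.\ on $K$, and the assumed Frostman maximum principle ($h=1$) upgrades this to $U^{\gamma_K}\leq 1$ on all of $X$. For any $\nu\in\Gamma_{K,\omega}$ I would then test the inequality against $\gamma_K$ via Fubini: since neither $\omega^K\in\mathcal{E}^+$ nor $\gamma_K\in\mathcal{E}^+$ charges sets of inner capacity zero, all ``n.e.\ on $K$'' identities can be read $\omega^K$- resp.\ $\gamma_K$-a.e., and one obtains
\begin{align*}
\omega^K(X)=\int U^{\gamma_K}\,d\omega^K=\int U^{\omega^K}\,d\gamma_K&=\int U^{\omega}\,d\gamma_K\\
&\leq\int U^{\nu}\,d\gamma_K=\int U^{\gamma_K}\,d\nu\leq\nu(X),
\end{align*}
the successive steps using $U^{\gamma_K}=1$ n.e.\ on $K$, Fubini, $U^{\omega^K}=U^\omega$ n.e.\ on $K$, the hypothesis $U^\nu\geq U^\omega$ n.e.\ on $K$, Fubini again, and finally $U^{\gamma_K}\leq1$ on all of $X$.

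For arbitrary $A$ I would note that any $\nu\in\Gamma_{A,\omega}$ automatically lies in $\Gamma_{K,\omega}$ for every compact $K\subset A$, whence the compact-case bound gives $\omega^K(X)\leq\nu(X)$ uniformly in $K$. Invoking Theorem~\ref{th-char}(ii), which guarantees $\omega^K\to\omega^A$ vaguely along the net $K\uparrow A$, together with the vague lower semicontinuity of the total-mass functional on $\mathfrak{M}^+$ (immediate from $\mu(X)=\sup\{\mu(\varphi):\varphi\in C_0^+(X),\,0\leq\varphi\leq1\}$), I would conclude $\omega^A(X)\leq\liminf_{K\uparrow A}\omega^K(X)\leq\nu(X)$, which is exactly (\ref{cor1}).

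The main obstacle is the availability of an equilibrium measure $\gamma_K$ whose potential is bounded by $1$ on \emph{all} of $X$; this is precisely where the Frostman assumption $h=1$ is used in an essential way, in contrast to the weaker $h$-Ugaheri principle tolerated in $(\mathcal{R}_1)$ (which would only yield $U^{\gamma_K}\leq h$ and thus a non-sharp constant in the final bound). Once $\gamma_K$ is in hand, both the Fubini manipulation and the vague-limit passage supplied by Theorem~\ref{th-char}(ii) are routine.
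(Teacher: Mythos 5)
Your argument is correct, but it follows a genuinely different route from the paper. The paper's proof is a two-line deduction: by Theorem~\ref{th-char}(iii) one has $U^{\omega^A}\leqslant U^\nu$ \emph{on all of} $X$ for every $\nu\in\Gamma_{A,\omega}$, and then Deny's principle of positivity of mass in the form of \cite[Theorem~2.1]{Z23a} (namely, $U^\mu\leqslant U^\nu$ everywhere implies $\mu(X)\leqslant\nu(X)$, valid under Frostman's maximum principle) gives (\ref{cor1}) at once. You instead bypass that cited principle and essentially re-derive the instance of it that is needed: for compact $K$ you test against the equilibrium measure $\gamma_K$ (whose potential equals $1$ n.e.\ on $K$ and, by Frostman, is $\leqslant1$ everywhere), run the Fubini/reciprocity chain $\omega^K(X)=\int U^{\gamma_K}\,d\omega^K=\int U^{\omega^K}\,d\gamma_K\leqslant\int U^{\nu}\,d\gamma_K\leqslant\nu(X)$, and then pass to $A$ via Theorem~\ref{th-char}(ii) and the vague lower semicontinuity of total mass. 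All the individual steps are sound: the n.e.\ identities may indeed be read a.e.\ with respect to measures of finite energy concentrated on the relevant sets, the Fubini--Tonelli interchanges are legitimate for a positive l.s.c.\ kernel, and the limit passage along $K\uparrow A$ is exactly what part (ii) of Theorem~\ref{th-char} supplies. What the paper's route buys is brevity, at the price of invoking an external result; what your route buys is a self-contained proof that makes completely explicit where the hypothesis $h=1$ is indispensable (the global bound $U^{\gamma_K}\leqslant1$), which is precisely the point the subsequent Remark in the paper elaborates on.
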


\begin{proof}Indeed, (\ref{cor1}) follows at once from Theorem~\ref{th-aux}(iii) by applying Deny's principle of positivity of mass in the form stated in \cite[Theorem~2.1]{Z23a}.\end{proof}

\begin{remark}
  However, the extremal property (\ref{cor1}) cannot serve as an alternative characterization of the inner balayage, for it does not determine $\omega^A$ uniquely. Indeed, consider the $\alpha$-Riesz kernel $|x-y|^{\alpha-n}$ of order $\alpha\leqslant2$, $\alpha<n$, on $\mathbb R^n$, $n\geqslant2$, and a proper, closed subset $A$ of $\mathbb R^n$ that is {\it not $\alpha$-thin at infinity} (take, for instance, $A:=\{|x|\geqslant 1\}$).\footnote{By Kurokawa and Mizuta \cite{KM}, a set $Q\subset\mathbb R^n$ is said to be {\it inner $\alpha$-thin at infinity} if
\begin{equation*}
 \sum_{j\in\mathbb N}\,\frac{\cp_*(Q_j)}{q^{j(n-\alpha)}}<\infty,
 \end{equation*}
where $q\in(1,\infty)$ and $Q_j:=Q\cap\{x\in\mathbb R^n:\ q^j\leqslant|x|<q^{j+1}\}$. See also \cite[Section~2]{Z-bal2}.\label{f-KM}} Then for any $\omega\in\mathcal E^+(A^c)$,
\begin{equation}\label{cor2}\omega^A\ne\omega\quad\text{and}\quad\omega^A(\mathbb R^n)=\omega(\mathbb R^n),\end{equation}
the former being obvious e.g.\ from Theorem~\ref{th-aux}, whereas the latter holds true by \cite[Corollary~5.3]{Z-bal2}.
Since $\omega,\omega^A\in\Gamma_{A,\omega}$ while $\Gamma_{A,\omega}$ is convex, combining (\ref{cor1}) with (\ref{cor2}) implies that there are actually infinitely many measures of minimum total mass in $\Gamma_{A,\omega}$, for so is each measure of the form $a\omega+b\omega^A$, where $a,b\in[0,1]$ and $a+b=1$.
\end{remark}

\begin{remark}
If $\omega\in\mathcal E^+$, then Theorems~\ref{th-aux} and \ref{th-char} still hold for an arbitrary perfect kernel satisfying the domination principle (see \cite{Z22}--\cite{Z23b}). Moreover, then $\omega^A$ is the (unique) orthogonal projection of $\omega$ onto the (convex, strongly complete) cone $\mathcal E'(A)$ in the pre-Hil\-bert space $\mathcal E$ (see \cite[Theorem~4.3]{Z22}).\footnote{For the concept of orthogonal projection in a pre-Hilbert space onto a convex cone and its characteristic properties, see Edwards \cite{E2} (Theorem~1.12.3 and Proposition~1.12.4(2)).} That is, $\omega^A\in\mathcal E'(A)$ and
\[\|\omega-\omega^A\|=\min_{\nu\in\mathcal E'(A)}\,\|\omega-\nu\|.\]
In the remaining case $\omega\not\in\mathcal E^+$, see \cite[Theorem~2.7]{Z24c} for a similar alternative characterization of $\omega^A$, involving the problem of minimization the Gauss functional
\[\|\mu\|^2-2\int U^\omega\,d\mu,\] where $\mu$ ranges over a suitable subclass of $\mathcal E'(A)$.
\end{remark}

\begin{remark}\label{rem-count}Under the requirements of Theorem~\ref{th-char}, if moreover $(\mathcal R_4)$ holds, then the characteristic property of $\omega^A$, given by the symmetry relation (\ref{sym}), needs only to be verified for certain {\it countably} many $\lambda\in\mathcal E^+$, depending on $X$ and $\kappa$ only. See \cite[Theorem~1.4]{Z23b} if $\omega\in\mathcal E^+$, and \cite[Theorem~5.2]{Z24c} otherwise.\end{remark}

\subsection{On the inductive limit topology on $C_0(X)$}\label{sec-prel} By Bourbaki \cite[Section~III.1.1]{B2}, {\it the inductive limit topology} on $C_0(X)$ is the inductive limit $\mathcal T$
of the locally convex topologies of the spaces $C_0(K;X)$, where $K$ ranges over all compact subsets of $X$, while $C_0(K;X)$ is the space of all $\varphi\in C_0(X)$ with ${\rm Supp}(\varphi)\subset K$, equipped with the topology $\mathcal T_K$ of uniform convergence on $K$. Thus, by \cite[Section~II.4, Proposition~5]{B4}, $\mathcal T$ is the finest of the locally convex topologies on $C_0(X)$ for which all the canonical injections $C_0(K;X)\to C_0(X)$, $K\subset X$ being compact, are continuous.

Since the space $X$ is $\sigma$-compact, there is a sequence of relatively compact, open sets $U_j$ with the union $X$ and such that $\overline{U}_j\subset U_{j+1}$, see \cite[Section~I.9, Proposition~15]{B1}. As the topology induced on $C_0(\overline{U}_j;X)$ by $\mathcal T_{\overline{U}_{j+1}}$ is just $\mathcal T_{\overline{U}_j}$, the space $C_0(X)$ is then the {\it strict} inductive limit of the sequence of spaces $C_0(\overline{U}_j;X)$, cf.\ \cite[Section~II.4.6]{B4}. Hence, by \cite[Section~II.4, Proposition~9]{B4}, $C_0(X)$ is Hausdorff and complete (in $\mathcal T$).

\begin{lemma}\label{foot-conv}For any sequence $(\varphi_j)\subset C_0(X)$, {\rm(i$_1$)} and {\rm(ii$_1$)} are equivalent.
\begin{itemize}\item[{\rm(i$_1$)}] $(\varphi_j)$ converges to $0$ in the strict inductive limit topology $\mathcal T$.
\item[{\rm(ii$_1$)}]There exists a compact subset $K$ of $X$ such that ${\rm Supp}(\varphi_j)\subset K$ for all $j$, and $(\varphi_j)$ converges to $0$ uniformly on $K$.\end{itemize}
\end{lemma}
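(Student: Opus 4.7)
The plan is to treat the two implications separately. The reverse implication (ii$_1$) $\Rightarrow$ (i$_1$) is immediate from the definition of $\mathcal{T}$: picking $j_0$ with $K\subset\overline{U}_{j_0}$, the hypothesis gives $(\varphi_j)\subset C_0(\overline{U}_{j_0};X)$ with $\varphi_j\to 0$ in $\mathcal{T}_{\overline{U}_{j_0}}$, and the continuity of the canonical injection $(C_0(\overline{U}_{j_0};X),\mathcal{T}_{\overline{U}_{j_0}})\hookrightarrow(C_0(X),\mathcal{T})$ then yields $\varphi_j\to 0$ in $\mathcal{T}$.

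The forward implication (i$_1$) $\Rightarrow$ (ii$_1$) rests on the classical fact that every $\mathcal{T}$-convergent sequence in a strict inductive limit is confined to a single step. Concretely, I reduce to showing that there exists $j_0$ with ${\rm Supp}(\varphi_j)\subset\overline{U}_{j_0}$ for all $j$. Once this holds, the topology induced by $\mathcal{T}$ on $C_0(\overline{U}_{j_0};X)$ coincides with $\mathcal{T}_{\overline{U}_{j_0}}$ --- the defining feature of the strict inductive limit of the sequence $C_0(\overline{U}_j;X)$, recalled just above the lemma --- so the assumption $\varphi_j\to 0$ in $\mathcal{T}$ delivers uniform convergence on $K:=\overline{U}_{j_0}$.

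To establish the support confinement, I argue by contradiction. If no such $j_0$ exists, then after extracting a subsequence (still denoted $(\varphi_j)$) one has ${\rm Supp}(\varphi_j)\not\subset\overline{U}_j$, and so I can select points $x_j\in X\setminus\overline{U}_j$ with $\varphi_j(x_j)\ne 0$ --- possible by definition of the support, as $X\setminus\overline{U}_j$ is open. The sequence $(x_j)$ is then locally finite in $X$, because any compact $C\subset X$ lies in some $\overline{U}_N$ and hence contains no $x_j$ with $j\geqslant N$. Therefore every $\psi\in C_0(X)$ vanishes at all but finitely many $x_j$, so the formula
\[p(\psi):=\sum_j \frac{j}{|\varphi_j(x_j)|}\,|\psi(x_j)|,\quad\psi\in C_0(X),\]
defines a seminorm on $C_0(X)$ whose restriction to each $C_0(\overline{U}_m;X)$ is a finite linear combination of point evaluations, hence $\mathcal{T}_{\overline{U}_m}$-continuous. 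By the universal property of the strict inductive limit, $p$ is $\mathcal{T}$-continuous; yet $p(\varphi_j)\geqslant j$, contradicting $\varphi_j\to 0$ in $\mathcal{T}$.

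The only delicate step is the construction of this seminorm $p$, which must detect the escape of $(x_j)$ to infinity while remaining $\mathcal{T}$-continuous on all of $C_0(X)$ --- a task made possible precisely by the local finiteness of $(x_j)$. Everything else is routine bookkeeping around the strict inductive limit structure summarized in the excerpt.
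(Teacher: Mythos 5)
Your proof is correct. The overall skeleton is the same as the paper's --- both directions reduce to the fact that the topology induced by $\mathcal T$ on $C_0(\overline{U}_{j_0};X)$ is $\mathcal T_{\overline{U}_{j_0}}$, and the crux of (i$_1$)$\Rightarrow$(ii$_1$) is confining all the supports to one compact set --- but you prove that crux differently. The paper gets the support confinement by citing Bourbaki (a $\mathcal T$-convergent sequence is $\mathcal T$-bounded, and bounded subsets of $C_0(X)$ over a paracompact locally compact space have supports in a fixed compact set), whereas you reprove it from scratch with the classical sliding-hump argument: extract a subsequence escaping every $\overline{U}_j$, pick witnesses $x_j\in{\rm Supp}(\varphi_j)\setminus\overline{U}_j$ with $\varphi_j(x_j)\neq0$, and build the locally finite seminorm $p(\psi)=\sum_j j\,|\psi(x_j)|/|\varphi_j(x_j)|$, which is $\mathcal T$-continuous by the universal property yet satisfies $p(\varphi_j)\geqslant j$. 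This buys a self-contained, elementary proof that trades the appeal to paracompactness for the countable exhaustion $(\overline{U}_j)$ already set up before the lemma; the cost is a couple of routine points you gloss over --- that if no single $j_0$ works then \emph{infinitely} many indices escape each $\overline{U}_m$ (so the subsequence extraction is legitimate), and that the coincidence of the induced topology with $\mathcal T_{\overline{U}_{j_0}}$ is a theorem about strict inductive limits rather than literally their definition. Neither is a gap.
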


\begin{proof}
The space $X$ being $\sigma$-compact, the claim can be derived from \cite[Lemma~4.2]{Z23b}. For the sake of completeness, we shall nevertheless provide its direct proof.

For any compact $K\subset X$, the topology on the space $C_0(K;X)$ induced by $\mathcal T$ is identical with the topology $\mathcal T_K$ \cite[Section~III.1, Proposition~1(i)]{B2}, whence (ii$_1$)$\Longrightarrow$(i$_1$).

For the opposite, assume $(\varphi_k)\subset C_0(X)$ approaches $0$ in $\mathcal T$.
Since $\{\varphi_k: k\in\mathbb N\}$ is then bounded in $\mathcal T$, there is a compact set $K\subset X$ such that ${\rm Supp}(\varphi_k)\subset K$ for all $k$, see \cite[Section~III.1, Proposition~2(ii)]{B2}. (This Proposition can indeed be applied here, for a locally compact, $\sigma$-compact space is paracompact \cite[Section~I.9, Theorem~5]{B1}.) Using \cite[Section~III.1, Proposition~1(i)]{B2} once again, we therefore conclude that $(\varphi_k)$ also approaches $0$ in the (Hausdorff) topology $\mathcal T_K$, which is the remaining claim.
\end{proof}

Let $C_0^\circ(X)$ consist of all $\psi\in C_0(X)$ representable as potentials $U^\theta$ of $\theta\in\mathcal E$.

\begin{lemma}\label{l-dense}If $(\mathcal R_4)$ is fulfilled, then for any given $\varphi\in C_0(X)$, there exist a sequence $(\psi_j)\subset C_0^\circ(X)$ and a compact set $K\subset X$ such that all the $\varphi$ and $\psi_j$ are equal to $0$ on $K^c$, and moreover $\psi_j\to\varphi$ uniformly on $K$ when $j\to\infty$.\end{lemma}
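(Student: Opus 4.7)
The plan is to combine the $\mathcal T$-density furnished by $(\mathcal R_4)$ with the sequential characterization of $\mathcal T$-convergence established in Lemma~\ref{foot-conv}. The crucial move is to extract, from density, a sequence $(\psi_j)\subset C_0^\circ(X)$ converging to $\varphi$ in the strict inductive limit topology $\mathcal T$; Lemma~\ref{foot-conv} then automatically supplies the common compact support and uniform convergence that we need.

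For the extraction, I would exploit that, by $\sigma$-compactness of $X$ together with the discussion in Section~\ref{sec-prel}, $(C_0(X),\mathcal T)$ is the strict inductive limit of the countable chain of sup-norm Banach spaces $C_0(\overline{U}_j;X)$. Strict-LF topologies of this form are \emph{sequential}, i.e., sequentially closed sets are closed: if $A\subset C_0(X)$ is sequentially $\mathcal T$-closed, then for each $j$ the trace $A\cap C_0(\overline{U}_j;X)$ is sequentially closed in the metrizable Banach space $C_0(\overline{U}_j;X)$, hence closed there, and the defining property of the strict inductive limit then forces $A$ itself to be $\mathcal T$-closed. Combined with the linear-subspace structure of $C_0^\circ(X)$ (it is closed under linear combinations, since $\mathcal E$ is a vector space and $U^{\theta_1+\theta_2}=U^{\theta_1}+U^{\theta_2}$), the density hypothesis $(\mathcal R_4)$ thereby promotes to the existence of a sequence $(\psi_j)\subset C_0^\circ(X)$ with $\psi_j\to\varphi$ in $\mathcal T$.

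Applying Lemma~\ref{foot-conv} to the null sequence $(\psi_j-\varphi)$ then produces a compact $K_0\subset X$ such that ${\rm Supp}(\psi_j-\varphi)\subset K_0$ for every $j$ and $\psi_j-\varphi\to 0$ uniformly on $K_0$. Setting $K:=K_0\cup{\rm Supp}(\varphi)$ yields a compact set with the required properties: on $K^c$ one has $\varphi\equiv 0$ (since $K^c\subset{\rm Supp}(\varphi)^c$) and $\psi_j-\varphi\equiv 0$ (since $K^c\subset K_0^c$), hence $\psi_j\equiv 0$ there; and the uniform convergence of $\psi_j$ to $\varphi$ on $K$ reduces to that on $K_0\cap K$, because $\psi_j=\varphi$ pointwise on $K\setminus K_0\subset{\rm Supp}(\varphi)$.

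The main obstacle is the sequence-extraction step in the second paragraph. Since $\mathcal T$ is not first-countable, the implication ``density $\Rightarrow$ sequential density'' cannot be invoked abstractly (the distinction between sequential and Fr\'echet--Urysohn topologies is at play here). The argument must therefore lean on the full strength of the strict-LF structure, in particular on the metrizability of each factor $C_0(\overline{U}_j;X)$, on the Dieudonn\'e--Schwartz theorem that $\mathcal T$-bounded subsets lie in some $C_0(\overline{U}_j;X)$, and on the fact that $C_0^\circ(X)$ is a linear subspace. Once a $\mathcal T$-convergent sequence is in hand, the remainder is a direct application of Lemma~\ref{foot-conv}.
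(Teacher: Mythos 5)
Your overall plan coincides with the paper's: produce a sequence $(\psi_j)\subset C_0^\circ(X)$ converging to $\varphi$ in $\mathcal T$, and then let Lemma~\ref{foot-conv} deliver the common compact support and the uniform convergence (your final reduction, with $K:=K_0\cup{\rm Supp}(\varphi)$, is correct). The genuine gap is exactly where you locate it, namely the extraction of a $\mathcal T$-convergent sequence from the mere $\mathcal T$-density postulated in $(\mathcal R_4)$, and the mechanism you propose does not close it. First, your argument that $(C_0(X),\mathcal T)$ is sequential is incorrect: the implication ``if $A\cap C_0(\overline{U}_j;X)$ is closed for every $j$, then $A$ is $\mathcal T$-closed'' is the defining property of the \emph{final} (purely topological) inductive limit, not of the \emph{locally convex} inductive limit $\mathcal T$, which is only the finest \emph{locally convex} topology rendering the injections continuous; only convex sets are characterized by their traces on the steps. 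The two topologies differ in general, and strict inductive limits of this kind are known not to be sequential (the classical example being $\mathcal D(\Omega)$). Second, as you yourself concede, even sequentiality would not give what you need: ``dense $\Rightarrow$ sequentially dense'' requires the Fr\'echet--Urysohn property, not sequentiality. Your closing paragraph acknowledges that the step is missing and gestures at the Dieudonn\'e--Schwartz boundedness theorem and the linearity of $C_0^\circ(X)$, but no argument is actually supplied, so the proof is incomplete at its central point.

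For comparison, the paper bridges this gap not through any sequentiality of $\mathcal T$ but through the second countability of $X$: by Bourbaki's lemma \cite[Section~V.3.1, Lemma]{B2} there is a \emph{countable} set $S\subset C_0(X)$ such that every $\varphi\in C_0(X)$ admits $(g_j)\subset S$ and $g_0\in S^+$ with $|\varphi-g_j|<\delta g_0$ for all $j\geqslant j_0$; this yields $g_j\to\varphi$ uniformly with supports in one fixed compact set, hence $g_j\to\varphi$ in $\mathcal T$ by Lemma~\ref{foot-conv}, and the approximation of the countably many $g_j$ by elements of $C_0^\circ(X)$ furnished by $(\mathcal R_4)$ is then combined with this to produce the required sequence. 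If you wish to repair your write-up, replace the sequentiality claim by an argument of this countable-reduction type rather than relying on topological properties that the strict inductive limit does not possess.
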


\begin{proof} As $X$ is second-countable, there is a countable set $S\subset C_0(X)$ having the following
property \cite[Section~V.3.1, Lemma]{B2}: for a given $\varphi\in C_0(X)$, there exist a sequence $(g_j)\subset S$ and a function $g_0\in S^+$ such that,\footnote{For any class $\mathcal F$ of real-valued functions, $\mathcal F^+$ denotes the class of all $f\geqslant0$ from $\mathcal F$.} for any small $\delta\in(0,\infty)$,
\[|\varphi-g_j|<\delta g_0\quad\text{for all $j\geqslant j_0$}.\]
This implies that, for those $\varphi\in C_0(X)$ and $(g_j)\subset S$, there is a compact set $K_0\subset X$ such that all the $\varphi$ and $g_j$ equal $0$ on $K_0^c$, while $g_j\to\varphi$ uniformly on $K_0$. Thus, by virtue of Lemma~\ref{foot-conv}, $g_j\to\varphi$ also in the strict inductive limit topology $\mathcal T$.

Noting from $(\mathcal R_4)$ that each $g_j$ can be approximated in $\mathcal T$ by $\psi\in C_0^\circ(X)$, we conclude from the above that there exists a sequence $(\psi_j)\subset C_0^\circ(X)$ which converges to $\varphi$ in the topology $\mathcal T$. Applying Lemma~\ref{foot-conv} once again, we arrive at the claim.
\end{proof}

\section{Proof of Theorem~\ref{th-main}} Assuming that a kernel $\kappa$ on a second-countable, locally compact space $X$ satisfies $(\mathcal R_1)$--$(\mathcal R_4)$, fix an arbitrary set $A\subset X$ such that $\Omega:=X\setminus\overline{A}\ne\varnothing$, and a measure $\omega\in\mathfrak M^+(\Omega)$ such that $I(\omega)$ or $\omega(X)$ is finite. Then $\omega^A$ and $\varepsilon_x^A$, where $x\in\Omega$, do exist, and they are uniquely determined by means of either of Theorems~\ref{th-aux} or \ref{th-char}.

Defining the function $x\mapsto\xi_x$ on $X$ with values in $\mathfrak M^+$ by setting
\begin{equation}\label{xi}
\xi_x:=\left\{
\begin{array}{cl}\varepsilon_x^A &\text{\ if $x\in\Omega$},\\
0&\text{\ otherwise},\\ \end{array} \right.\end{equation}
we claim that it is {\it scalar essentially integrable for the measure $\omega$}.

By Bourbaki \cite[Section~V.3.1]{B2}, this means that for every $\varphi\in C_0(X)$, the function $x\mapsto\xi_x(\varphi)$ on $X$ is essentially $\omega$-integrable, or equivalently $\omega$-integrable, the space $X$ being $\sigma$-compact (see \cite[Section~V.1, Corollary to Proposition~9]{B2}). In view of (\ref{xi}), we are thus reduced to the $\omega$-integrability of the function $x\mapsto\varepsilon_x^A(\varphi)$ on $\Omega$.

We denote by $\mathcal E^\circ$ the set of all $\theta\in\mathcal E$ with $U^\theta\in C_0(X)$, and
by $C_0^\circ(X)$ the subset of $C_0(X)$ consisting of all $U^\theta$, $\theta$ ranging over $\mathcal E^\circ$. For any $x\in\Omega$ any any $\psi\in C_0^\circ(X)$,\footnote{As usual, $\theta^+$ and $\theta^-$ denote the positive and negative parts of $\theta$ in the Hahn--Jor\-dan decomposition, see \cite[Section~III.1, Theorem~2]{B2}.}
\begin{align}\notag\varepsilon_x^A(\psi)&=\int U^\theta(y)\,d\varepsilon_x^A(y)=\int U^{\theta^+}(y)\,d\varepsilon_x^A(y)-\int U^{\theta^-}(y)\,d\varepsilon_x^A(y)\\
{}&=\int U^{\varepsilon_x}(y)\,d(\theta^+)^A(y)-\int U^{\varepsilon_x}(y)\,d(\theta^-)^A(y)=
U^{(\theta^+)^A}(x)-U^{(\theta^-)^A}(x),\label{al}
\end{align}
and hence $x\mapsto\varepsilon_x^A(\psi)$, $x\in\Omega$, is $\omega$-measurable as the difference of two l.s.c.\ functions. (The third equality in (\ref{al}) is due to (\ref{sym}) with $\lambda:=\theta^\pm$ and $\omega:=\varepsilon_x$, where $x\in\Omega$. Here we have used the fact that $\theta\in\mathcal E$ if and only if $\theta^\pm\in\mathcal E^+$, see \cite[Section~3.1]{F1}.)

We proceed by verifying that for any $\varphi\in C_0(X)$, the function $x\mapsto\varepsilon_x^A(\varphi)$, $x\in\Omega$, is $\omega$-measurable as well. By virtue of Lemma~\ref{l-dense}, there exist a sequence $(\psi_j)\subset C_0^\circ(X)$ and a compact set $K\subset X$ such that all the $\varphi$ and $\psi_j$ equal $0$ on $K^c$, and moreover $\psi_j\to\varphi$ uniformly on $K$ when $j\to\infty$. On account of the inequalities
\[\varepsilon_x^A(X)\leqslant h\varepsilon_x(X)=h<\infty,\quad x\in\Omega,\]
$h$ appearing in $h$-Ugaheri's maximum principle (see \cite[Proposition~4.1]{Z24c}), this implies that for any $x\in\Omega$ and any arbitrarily small $\delta\in(0,\infty)$,
\[\Bigl|\int(\varphi-\psi_j)\,d\varepsilon_x^A\Bigr|\leqslant\int|\varphi-\psi_j|\,d\varepsilon_x^A\leqslant h\delta\quad\text{for all $j\geqslant j_0$}.\]
(Here we have used \cite[Section~IV.4, Proposition~2]{B2}.) Being thus the pointwise limit of the sequence of $\omega$-measurable functions $x\mapsto\varepsilon_x^A(\psi_j)$, $j\in\mathbb N$, the function $x\mapsto\varepsilon_x^A(\varphi)$, $x\in\Omega$, is likewise $\omega$-measurable (Egoroff's theorem \cite[Section~IV.5, Theorem~2]{B2}).

In view of \cite[Section~IV.5, Theorem~5]{B2}, the claimed $\omega$-integrability of the function $x\mapsto\varepsilon_x^A(\varphi)$, $x\in\Omega$, where $\varphi\in C_0(X)$, is therefore reduced to the inequality
\begin{equation}\label{int}
\int|\varepsilon_x^A(\varphi)|\,d\omega(x)<\infty.
\end{equation}

Noting that every $\varphi\in C_0(X)$ is the difference of two elements of $C_0^+(X)$, we can assume that $\varphi\geqslant0$ (cf.\ \cite[Section~IV.4, Corollary~2 to Theorem~1]{B2}). Denoting by $\gamma$ the capacitary distribution on $\mathfrak S:={\rm Supp}(\varphi)$, normalized by\footnote{Here we use the fact that the capacity of any compact set is finite by the energy principle.}
\[\gamma(X)=I(\gamma)=\cp\mathfrak S\] (see \cite[Theorem~4.1]{F1}), we have $U^\gamma\geqslant c>0$ on $\mathfrak S$, a l.s.c.\ function attaining its greatest lower bound in a compact set \cite[Section~IV.6, Theorem~3]{B1}. (This bound must be strictly positive, for a strictly positive definite kernel is necessarily strictly positive \cite[p.~150]{F1}.) This yields that there exists $q\in(0,\infty)$ such that
\[\varphi\leqslant qU^\gamma=U^{q\gamma}\quad\text{on all of $X$},\]
and therefore
\begin{align}&\int\varepsilon_x^A(\varphi)\,d\omega(x)=
\int d\omega(x)\int\varphi(y)\,d\varepsilon_x^A(y)\leqslant
\int d\omega(x)\int U^{q\gamma}(y)\,d\varepsilon_x^A(y)\notag\\
{}&=\int d\omega(x)\int U^{q\gamma^A}(y)\,d\varepsilon_x(y)=
\int U^{q\gamma^A}(x)\,d\omega(x)\leqslant\int U^{q\gamma}(x)\,d\omega(x).\label{int1}\end{align}
(Here the second equality holds true by (\ref{sym}) with $\lambda:=q\gamma\in\mathcal E^+$ and $\omega:=\varepsilon_x$, where $x\in\Omega$, while the last inequality follows from $U^{\gamma^A}\leqslant U^\gamma$ on $X$, cf.\ \cite[Theorem~4.3]{Z22}.)

Now, if $I(\omega)<\infty$, then the Cauchy--Schwarz (Bunyakovski) inequality, applied to the last integral in (\ref{int1}), gives
\[\int U^{q\gamma}(x)\,d\omega(x)=\langle q\gamma,\omega\rangle\leqslant \|q\gamma\|\cdot\|\omega\|<\infty,\]
whence (\ref{int}).
Otherwise, $\omega(X)$ must be finite, and (\ref{int1}) again results in (\ref{int}) by noting that $U^\gamma\leqslant1$ on ${\rm Supp}(\gamma)$ \cite[Theorem~4.1]{F1}, hence $\gamma$-a.e., and consequently
\[U^\gamma\leqslant h<\infty\quad\text{on all of $X$},\] by the $h$-Ugaheri maximum principle.

The function $x\mapsto\xi_x(\varphi)$ being thus $\omega$-integrable for all $\varphi\in C_0(X)$, we may according to \cite[Section~V.3.1]{B2} define the Radon measure
\begin{equation}\label{nu}
\nu:=\int\xi_x\,d\omega(x)=\int\varepsilon_x^A\,d\omega(x)\quad\text{on $X$}
\end{equation}
by means of the formula
\begin{equation}\label{I}
\int\varphi(z)\,d\nu(z)=\int\biggl(\int\varphi(z)\,d\varepsilon_x^A(z)\biggr)\,d\omega(x),\quad\text{where $\varphi\in C_0(X)$.}
\end{equation}
Moreover, by virtue of \cite[Section~V.3, Proposition~2(c)]{B2}, equality (\ref{I}) remains valid when $\varphi$ is replaced by any positive l.s.c.\ function on $X$. For a given $y\in X$, we apply this to the (positive l.s.c.) function $\kappa(y,z)$, $z\in X$, and thus obtain
\begin{equation}\label{repr-th1}
U^\nu(y)=\int\biggl(\int\kappa(y,z)\,d\varepsilon_x^A(z)\biggr)\,d\omega(x)=\int U^{\varepsilon_x^A}(y)\,d\omega(x).
\end{equation}

To establish (\ref{i}), we need to prove that $\nu=\omega^A$, or equivalently (cf.\ (\ref{sym}))
\[I(\nu,\lambda)=I(\omega,\lambda^A)\quad\text{for every $\lambda\in\mathcal E^+$}.\]
Using Lebesgue--Fubini's theorem \cite[Section~V.8, Theorem~1]{B2}, we conclude from (\ref{repr-th1}) that, indeed,
\begin{align*}I(\nu,\lambda)&=\int U^\nu(y)\,d\lambda(y)=\int\biggl(\int U^{\varepsilon_x^A}(y)\,d\omega(x)\biggr)\,d\lambda(y)\\
  {}&={\int\biggl(\int U^{\varepsilon_x^A}(y)\,d\lambda(y)\biggr)\,d\omega(x)
  =\int\biggl(\int U^{\varepsilon_x}(y)\,d\lambda^A(y)\biggr)\,d\omega(x)}\\
  {}&=\int\biggl(\int\kappa(x,y)\,d\omega(x)\biggr)\,d\lambda^A(y)
  =\int U^\omega(y)\,d\lambda^A(y)=I(\omega,\lambda^A),
\end{align*}
where the fourth equality is valid by virtue of (\ref{sym}) with $\omega:=\varepsilon_x$, $x\in\Omega$. Now, substituting the equality $\nu=\omega^A$ thus verified into (\ref{nu}) we get (\ref{i}) as required.

It remains to substantiate the latter part of the theorem. But for any Borel $A$, applying Theorem~\ref{th-aux} gives
\[\omega^A=\omega^{*A},\qquad\varepsilon_x^A=\varepsilon_x^{*A},\quad\text{where $x\in\Omega$},\]
which substituted into (\ref{i}) implies (\ref{o}), thereby completing the whole proof.

\section{On applications of Theorem~\ref{th-main}}

\begin{example}Fix $n\geqslant2$, where $n\in\mathbb N$, and $\alpha\in(1,2)$. On a bounded open set $D\subset\mathbb R^n$ of class $C^{1,1}$ (see footnote~\ref{F11}), consider the fractional $\alpha$-Green kernel $g_\alpha(x,y)$ of order $\alpha$ (see e.g.\ \cite[Section~5]{Fr} or \cite[Section~IV.5.20]{L}), defined by means of
\[g_\alpha(x,y)=|x-y|^{\alpha-n}-\int|x-z|^{\alpha-n}\,d(\varepsilon_y)^{D^c}_{\kappa_\alpha}(z),\quad x,y\in D,\]
where $\nu^Q_{\kappa_\alpha}$ denotes the balayage of a measure $\nu\in\mathfrak M^+(\mathbb R^n)$ onto a closed set $Q\subset\mathbb R^n$ with respect to the $\alpha$-Riesz kernel $\kappa_\alpha(z,\zeta):=|z-\zeta|^{\alpha-n}$ on $\mathbb R^n$ (see e.g.\ \cite{BH,FZ,L,Z-bal}).

Now, consider a set $F\subset D$ that is relatively closed in $D$, and a nonzero measure $\omega\in\mathfrak M^+(\mathbb R^n)$, $n\geqslant2$, concentrated on $\Delta:=D\setminus F$ and such that either the total mass $\omega(\Delta)$ or the $\alpha$-Green energy $\int g_\alpha(x,y)\,d(\omega\otimes\omega)(x,y)$ is finite. Then the balayage $\omega^F_{g_\alpha}$ of $\omega$ onto $F$ with respect to the kernel $g_\alpha$ on the locally compact space $D$ does exist, and it is uniquely determined by means of either of Theorems~\ref{th-aux} or \ref{th-char}.\footnote{Here we simply use the term "balayage", since for closed (hence Borel) $F\subset D$, the inner and the outer balayage coincide (Theorem~\ref{th-aux}).}
Moreover, by Theorem~\ref{th-main} (cf.\ also Remark~\ref{(b)}), $\omega^F_{g_\alpha}$ admits the integral representation
\begin{equation}\label{FF}
\omega^F_{g_\alpha}=\int(\varepsilon_x)^F_{g_\alpha}\,d\omega(x).
\end{equation}

\begin{theorem}
Under the stated assumptions,\footnote{Since the $\alpha$-Green kernel satisfies Frostman's maximum principle (see Remark~\ref{(a)} above), we observe from \cite[Proposition~4.1]{Z24c} that, in general, $\omega^F_{g_\alpha}(F)\leqslant\omega(\Delta)$.}
\begin{equation*}
\omega^F_{g_\alpha}(F)<\omega(\Delta).
\end{equation*}
\end{theorem}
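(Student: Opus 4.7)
My plan is to exploit the integral representation (\ref{FF}) established by Theorem~\ref{th-main} in order to reduce the strict inequality to a pointwise estimate on Dirac balayages. Evaluating on $F$ gives
\[\omega^F_{g_\alpha}(F)=\int(\varepsilon_x)^F_{g_\alpha}(F)\,d\omega(x),\]
and since Frostman's maximum principle together with (\ref{cor1}) already yields $(\varepsilon_x)^F_{g_\alpha}(F)\leqslant 1$ for every $x\in\Delta$, while $\omega$ is nonzero and concentrated on $\Delta$, the theorem will follow once the strict pointwise bound $(\varepsilon_x)^F_{g_\alpha}(F)<1$ is verified for every $x\in\Delta$.

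For such $x$, I would identify $(\varepsilon_x)^F_{g_\alpha}$ with the restriction to $F$ of the $\alpha$-Riesz balayage of $\varepsilon_x$ onto the closed set $F\cup D^c\subset\mathbb{R}^n$. Starting from $g_\alpha(y,z)=\kappa_\alpha(y,z)-U^{(\varepsilon_z)^{D^c}_{\kappa_\alpha}}_{\kappa_\alpha}(y)$ and applying Fubini after representing the Riesz balayage as $\mu^{D^c}_{\kappa_\alpha}=\int(\varepsilon_z)^{D^c}_{\kappa_\alpha}\,d\mu(z)$ via Theorem~\ref{th-main} applied to $\kappa_\alpha$, one obtains the potential identity $U^\mu_{g_\alpha}=U^\mu_{\kappa_\alpha}-U^{\mu^{D^c}_{\kappa_\alpha}}_{\kappa_\alpha}$ on $D$. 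Applied with $\mu=\varepsilon_x$ and with $\mu=(\varepsilon_x)^F_{g_\alpha}$, the defining n.e.\ equality of Green potentials on $F$ translates into a Riesz potential identity n.e.\ on $F$; the analogous identity is automatic n.e.\ on $D^c$ by the Riesz balayage characterization there. The uniqueness of Riesz balayage onto the closed set $F\cup D^c$ (Theorem~\ref{th-aux} applied to $\kappa_\alpha$) then gives, after cancellation,
\[(\varepsilon_x)^F_{g_\alpha}=(\varepsilon_x)^{F\cup D^c}_{\kappa_\alpha}\big|_F.\]
Because $D$ is bounded, $D^c$ contains the complement of a large ball, so $F\cup D^c$ is not inner $\alpha$-thin at infinity, and \cite[Corollary~5.3]{Z-bal2} yields $(\varepsilon_x)^{F\cup D^c}_{\kappa_\alpha}(\mathbb{R}^n)=1$. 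Hence
\[(\varepsilon_x)^F_{g_\alpha}(F)=1-(\varepsilon_x)^{F\cup D^c}_{\kappa_\alpha}(D^c),\]
and the problem reduces to showing that the mass pushed onto $D^c$ is strictly positive.

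To exclude $(\varepsilon_x)^{F\cup D^c}_{\kappa_\alpha}(D^c)=0$, I would argue by contradiction: setting $\mu:=(\varepsilon_x)^{F\cup D^c}_{\kappa_\alpha}$, this hypothesis would make $\mu$ a unit Radon measure supported on the bounded set $F$ with $U^\mu_{\kappa_\alpha}=\kappa_\alpha(\cdot,x)$ n.e.\ on $F$, so the uniqueness clause of Theorem~\ref{th-aux} applied to $\kappa_\alpha$ and the closed set $F\subset\mathbb{R}^n$ would force $\mu=(\varepsilon_x)^F_{\kappa_\alpha}$. But $F\subset D$ is bounded and therefore trivially inner $\alpha$-thin at infinity in the sense of footnote~\ref{f-KM} (only finitely many terms of the Kurokawa--Mizuta series are nonzero), whence the thin counterpart of \cite[Corollary~5.3]{Z-bal2} gives $(\varepsilon_x)^F_{\kappa_\alpha}(\mathbb{R}^n)<1$, contradicting $\mu(\mathbb{R}^n)=1$. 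The main obstacle I anticipate is the Green-vs-Riesz identity in the middle paragraph: it amounts to a careful bookkeeping of potentials through two applications of Fubini and of uniqueness of Riesz balayage, and is the one place where Theorem~\ref{th-main} itself enters the argument beyond the initial reduction. Once this identity is secured, the strict pointwise bound and hence the claim follow cleanly from the thinness dichotomy for $\alpha$-Riesz balayages.
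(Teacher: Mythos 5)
Your proposal is correct in substance and follows the paper's overall strategy: reduce via the integral representation (\ref{FF}) to the pointwise bound $(\varepsilon_x)^F_{g_\alpha}(F)<1$, pass to the $\alpha$-Riesz balayage onto the closed set $F\cup D^c$, use the fact that $F\cup D^c$ is not $\alpha$-thin at infinity to get total mass $1$ from \cite[Corollary~5.3]{Z-bal2}, and then show that a strictly positive portion of that mass lands on $D^c$. The two places where you diverge are worth noting. First, the identity $(\varepsilon_x)^F_{g_\alpha}=(\varepsilon_x)^{F\cup D^c}_{\kappa_\alpha}\bigl|_F$, which you propose to re-derive via a Green-versus-Riesz potential identity and uniqueness of balayage, is simply quoted in the paper from Frostman \cite[Eq.~(7)]{Fr}; your sketch is plausible but is the least finished part of your argument, and you could have avoided it entirely. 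Second, and more interestingly, for the crucial strict positivity $(\varepsilon_x)^{F\cup D^c}_{\kappa_\alpha}(D^c)>0$ the paper invokes the description of the support of the $\alpha$-Riesz swept measure \cite[Theorem~8.5]{Z-bal} (cf.\ Bogdan \cite[Lemma~6]{KB0}), whereas you argue by contradiction: if no mass reached $D^c$, uniqueness of balayage would identify $(\varepsilon_x)^{F\cup D^c}_{\kappa_\alpha}$ with $(\varepsilon_x)^F_{\kappa_\alpha}$, whose total mass is strictly less than $1$ because the bounded set $F$ is trivially inner $\alpha$-thin at infinity (footnote~\ref{f-KM}). This is a legitimate alternative that trades the support theorem for the thinness-at-infinity dichotomy; do note two small imprecisions: $F$ is only relatively closed in $D$, not closed in $\mathbb{R}^n$ (harmless, since Theorem~\ref{th-aux} covers arbitrary sets and $x\in\Delta$ lies outside $\overline{F}$), and the "thin counterpart" of \cite[Corollary~5.3]{Z-bal2} should be pinned down so that it yields $(\varepsilon_x)^F_{\kappa_\alpha}(\mathbb{R}^n)<1$ for \emph{every} such $x$ (for bounded $F$ this follows cleanly from $\varepsilon_x^F(\mathbb{R}^n)=U^{\gamma_F}(x)<1$ with $\gamma_F$ the equilibrium measure).
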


\begin{proof}
 As seen from (\ref{FF}), the theorem will be established once we show that
 \begin{equation*}
(\varepsilon_x)^F_{g_\alpha}(F)<1.
\end{equation*}
Since, by virtue of Frostman \cite[Eq.~(7)]{Fr},
\[(\varepsilon_x)^F_{g_\alpha}=(\varepsilon_x)^{F\cup D^c}_{\kappa_\alpha}\bigl|_F,\]
this is reduced, in turn, to the inequality
\begin{equation*}
(\varepsilon_x)^{F\cup D^c}_{\kappa_\alpha}(F)<1.
\end{equation*}

But the set $F\cup D^c$ is not, obviously, $\alpha$-thin at infinity (see footnote~\ref{f-KM}), whence  
\begin{equation*}
(\varepsilon_x)^{F\cup D^c}_{\kappa_\alpha}(F\cup D^c)=\varepsilon_x(\mathbb R^n)=1
\end{equation*}
according to \cite[Corollary~5.3]{Z-bal2}. It thus remains to verify that
\[(\varepsilon_x)^{F\cup D^c}_{\kappa_\alpha}(D^c)>0,\]
which however follows at once from the description of the support of the $\alpha$-Riesz swept measure $(\varepsilon_x)^{F\cup D^c}_{\kappa_\alpha}$, given in \cite[Theorem~8.5]{Z-bal} (cf.\ also Bogdan \cite[Lemma~6]{KB0}).
\end{proof}
\end{example}

\section{Acknowledgements} This research was supported in part by a grant from the Simons Foundation (1030291, N.V.Z.). The author also thanks B\'{e}la Nagy and Szil\'{a}rd R\'{e}v\'{e}sz for useful discussions on the content of the paper.

\end{document}